\newtheorem{thm}{Theorem}
\newtheorem{lem}{Lemma}
\newcommand{\dlim}{\displaystyle\lim}
\newcommand{\ds}{\displaystyle}
\title{A new approach to the results of K\"ovari, S\'os, and Tur\'an concerning rectangle-free subsets of the grid}
\author{Jeremy F. Alm\\ Illinois College\\ Jacksonville, IL 62650\\ \texttt{alm.academic@gmail.com} \and Jacob Manske\footnote{Corresponding author.}\\ Texas State University\\ San Marcos, TX 78666\\ \texttt{jmanske@txstate.edu}}
\begin{document}
\maketitle

\begin{abstract}
For positive integers $m$ and $n$, define $f(m,n)$ to be the smallest integer such that any subset $A$ of the $m \times n$ integer grid with $|A| \geq f(m,n)$ contains a rectangle; that is, there are $x\in [m]$ and $y \in [n]$ and $d_{1},d_{2} \in \mathbb{Z}^{+}$ such that all four points $(x,y)$, $(x+d_{1},y)$, $(x,y+d_{2})$, and $(x+d_{1},y+d_{2})$ are contained in $A$. In~\cite{kovarisosturan}, K\"ovari, S\'os, and Tur\'an showed that $\dlim_{k \to \infty}\dfrac{f(k,k)}{k^{3/2}} = 1$. They also showed that $f(p^{2},p^{2}+p) = p^{2}(p+1)+1$ whenever $p$ is a prime number. We recover their asymptotic result and strengthen the second, providing cleaner proofs which exploit a connection to projective planes, first noticed by Mendelsohn in~\cite{mendelsohn87}. We also provide an explicit lower bound for $f(k,k)$ which holds for all $k$.

\end{abstract}

\section{Introduction and motivation}

For a positive integer $n$, let $[n] = \left\{1,2,\ldots,n\right\}$. For $m,n \in \mathbb{Z}^{+}$, define $f(m,n)$ to be the least integer such that if $A \subseteq [m]\times [n]$ with $|A| \geq f(m,n)$, then $A$ contains a rectangle; that is, there is $x \in [m], y \in [n]$, and $d_{1},d_{2} \in \mathbb{Z}^{+}$ such that all four points $(x,y)$, $(x+d_{1},y)$, $(x,y+d_{2})$, and $(x+d_{1},y+d_{2})$ are contained in $A$. For ease in notation, let $f(k) = f(k,k)$. For $c \in \mathbb{Z}^{+}$, a \emph{$c$-coloring} of a set $S$ is a surjective map $\chi: S \rightarrow [c]$. If $\chi$ is constant on a set $A \subset S$, we say that $A$ is \emph{monochromatic}.

We will write $g(k) \sim h(k)$ to mean that functions $g$ and $h$ are \emph{asymptotically equal}; that is, $\dlim_{k \to \infty}\dfrac{g(k)}{h(k)} = 1$. Also, notice that $f(m,n) = f(n,m)$ for any choice of $n$ and $m$. 

The problem of finding bounds or exact values of $f(m,n)$ finds its roots in the famous theorem of van der Waerden from~\cite{vdwtheorem}, which states that given any positive integers $c$ and $d$, there exists an integer $N$ such that any $c$-coloring of $[N]$ contains a monochromatic arithmetic progression of length $d$. Szemer\'edi proved a density version of this theorem in~\cite{szemeredi75}, using the now well-known Regularity Lemma. Progress in this area is still being made. For instance, in~\cite{axenovichmanske08}, Axenovich and the second author try to find the smallest $k$ so that in any $2$-coloring of $[k]\times[k]$ there is a monochromatic \emph{square}; i.e., a rectangle with $d_{1}=d_{2}$. While the upper bounds are enormous, they proved $k \geq 13$; in~\cite{bacheli09}, Bacher and Eliahou show that $k=15$. In~\cite{gasarchandco}, the authors are interested in finding OBS$_{c}$, which is the collection of $[m]\times[n]$ grids which cannot be colored in $c$ colors without a monochromatic rectangle, but every proper subgrid can be; see also~\cite{cooperfennerpurewal}. 
For a more complete survey on van der Waerden type problems, see~\cite{ramseybook}.

Zarankiewicz introduced the problem of finding $f(m,n)$ in~\cite{zarankiewicz} using the language of minors of (0,1)-matrices. In~\cite{kovarisosturan}, K\"ovari, S\'os, and Tur\'an show that $f(k) \sim k^{3/2}$ and that whenever $p$ is a prime number, we have $f(p^{2}+p,p^{2}) = p^{2}(p+1) + 1$. In this manuscript, we will recover this asymptotic result and strengthen the second result.

In~\cite{reiman58}, Reiman achieved the bound of 
\begin{equation}
f(m,n) \leq \dfrac{1}{2}\left(m + \sqrt{m^{2}+4mn(n-1)}\right) + 1.\label{reimanineq}
\end{equation}
Notice that by setting $m = p^{2}+p$ and $n=p^{2}$, the right hand side of (\ref{reimanineq}) becomes $p^{2}(p+1) + 1$, so the result of K\"ovari, S\'os, and Tur\'an implies that the inequality is sharp. Reiman showed equality in (\ref{reimanineq}) in the case that $m=n=q^{2}+q+1$, provided $q$ is a prime power. In~\cite{mendelsohn87}, Mendelsohn recovers and strengthens the equality result of Reiman by noticing the connection of the Zarankiewicz problem to projective planes.

A $k \times k$ $(0,1)$-matrix $A$ corresponds to a subset $S_{A} \subset [k]\times [k]$ by 
\begin{center}
$(i,j) \in S$ if and only if the $(i,j)$ entry of $A$ is $1$.
\end{center}
Notice that the set $S_{A}$ contains a rectangle if and only if the matrix $A^{T}A$ has an entry off the main diagonal which is not equal to $0$ or $1$. Also notice that $tr(A^{T}A) = |S_{A}|$.

Such $(0,1)$-matrices arise in the study of projective planes. A projective plane of order $n$ is an incidence structure consisting of $n^{2}+n+1$ points and $n^{2}+n+1$ lines such that 
\begin{enumerate}
\item
any two distinct points lie on exactly one line;
\item
any two distinct lines intersect in exactly one point;
\item
each line contains exactly $n+1$ points; and
\item
there is a set of 4 points such that no 3 of these points lie on the same line.
\end{enumerate}

It is not known for which positive integers $n$ there exists a projective plane of order $n$; projective planes have been constructed for all prime-power orders, but for no others. In the well-known paper~\cite{bruckryser}, Bruck and Ryser show that if the square-free part of $n$ is divisible by a prime of the form $4k+3$, and if $n$ is congruent to $1$ or $2$ modulo $4$, then there is no projective plane of order $n$; see also~\cite{chowlapp}. More recently, the authors in~\cite{dwlv} draw a connection between the existence of projective planes of order greater than or equal to 157 and the number of cycles in $n \times n$ bipartite graphs of girth at least 6. In 1989, a computer search conducted by the authors in~\cite{nopporder10} showed that there is no projective plane of order 10. The smallest order for which it is still not known whether there is a projective plane is $12$, although the results in~\cite{prince99,suetake04,prince04,suetakeakiyama08,suetakeakiyama09} suggest that there is no such structure.

Next we state a lemma which appears in~\cite{mendelsohn87} connecting projective planes to the Zarankiewicz problem.
\begin{lem}
If $n$ is a positive integer such that there exists a projective plane of order $n$, then $f(n^{2}+n+1) = (n+1)(n^{2}+n+1) + 1$.
\label{mendellem}
\end{lem}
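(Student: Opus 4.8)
The plan is to prove Lemma~\ref{mendellem} by exhibiting matching upper and lower bounds for $f(n^2+n+1)$. Write $N = n^2+n+1$ and recall that a projective plane of order $n$ has exactly $N$ points and $N$ lines, each line containing $n+1$ points, each point lying on $n+1$ lines, and any two distinct lines meeting in exactly one point. The natural object to build is the $N \times N$ point-line incidence matrix $A$, where rows are indexed by points, columns by lines, and $A_{ij} = 1$ precisely when point $i$ lies on line $j$.

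For the lower bound, I would use this incidence matrix directly. By the axioms, each column has exactly $n+1$ ones, so $|S_A| = \mathrm{tr}(A^T A)$, but more usefully the total number of ones is $(n+1)N = (n+1)(n^2+n+1)$. The key point, using the remark from the excerpt, is that $S_A$ is rectangle-free: since any two distinct lines meet in exactly one point, the inner product of any two distinct columns of $A$ equals $1$, so every off-diagonal entry of $A^T A$ is exactly $1$, never $2$ or more. Hence $S_A$ is a rectangle-free subset of $[N]\times[N]$ of size $(n+1)(n^2+n+1)$, which forces $f(N) \geq (n+1)(n^2+n+1) + 1$.

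For the upper bound, I would show that any $A \subseteq [N]\times[N]$ with $|A| = (n+1)(n^2+n+1)+1$ must contain a rectangle, via a counting argument on column sums (a Kővári–Sós–Turán-style double count). Let $c_j$ be the number of points in column $j$, so $\sum_j c_j = |A|$. A rectangle-free set allows at most one pair of rows to be "shared" between any two columns; counting pairs of entries within columns gives $\sum_j \binom{c_j}{2} \leq \binom{N}{2}$, since each pair of rows can be realized in at most one column. Applying convexity (Jensen) to the left side with the constraint $\sum_j c_j = |A|$, I would derive that if $|A|$ exceeds $(n+1)N$ then $\sum_j \binom{c_j}{2} > \binom{N}{2}$, a contradiction; the arithmetic should collapse cleanly because $N = n^2+n+1$ makes the extremal column sum exactly $n+1$.

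The main obstacle I anticipate is making the upper-bound counting argument tight enough to yield the exact threshold rather than just an asymptotic bound: the convexity estimate must be pushed to equality and the integrality of the $c_j$ handled carefully so that the bound $(n+1)(n^2+n+1)$ emerges as the precise extremal value. I expect this is exactly where the special arithmetic of $N = n^2+n+1$ is essential, and where one must verify that equality in the double count corresponds precisely to the incidence structure of the projective plane. The lower bound, by contrast, should be essentially immediate once the incidence matrix is set up and the off-diagonal-entries-equal-one observation is invoked.
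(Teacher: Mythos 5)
Your proposal is correct and follows essentially the same route as the paper: the identical lower-bound construction from the point-line incidence matrix, and an upper bound by double-counting pairs of incidences against $\binom{N}{2}$ followed by a convexity estimate (the paper uses Cauchy--Schwarz on row sums where you use Jensen on column sums, which is the same inequality in transposed form). Your worry about tightness and integrality does not materialize: with $|A| = (n+1)N+1$ the convexity bound alone already gives $\sum_j \binom{c_j}{2} \geq \binom{N}{2} + n + \frac{1}{2} + \frac{1}{2N} > \binom{N}{2}$, precisely because $(n+1)^2 - (n+1) = N-1$.
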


We will include a proof of Lemma \ref{mendellem} both for completeness and since we will reference the lower bound construction in the proof of Theorem \ref{affinethm}.

\begin{proof}[Proof of Lemma \ref{mendellem}] Let $n$ be a positive integer such that there is a projective plane of that order. 
For ease in notation, set $N = n^{2}+n+1$.
First we will show that $f(N) \geq (n+1)N + 1$.

 We begin by constructing a $N\times N$ $(0,1)$-matrix $A$. There exists a projective plane $P$ of order $n$; so let $A$ be the $N \times N$ matrix whose rows correspond to the points of $P$ and whose columns correspond to the lines of $P$ where the $(i,j)$ entry of $A$ is equal to $1$ if and only if the point indexed by $i$ lies on the line indexed by $j$. Since any two distinct lines have exactly one point in common, the scalar product of any two distinct columns must be 1; hence, $S_{A}$ does not contain a rectangle. Since each line contains exactly $(n+1)$ points, $|S_{A}| = tr(A^{T}A) = (n+1)N$, so $f(N) \geq (n+1)N + 1$.

Now, suppose $A$ is any $N \times N$ $(0,1)$-matrix with $(n+1)N + 1$ nonzero entries, and let $a_{i}$ denote the number of $1$s in row $i$. 
The number of pairs of 1s in row $i$ is $\ds\binom{a_{i}}{2}$, so the total number of pairs of 1s from each row is $\ds\sum_{i=1}^{N}\binom{a_{i}}{2}$.
The number of pairs of distinct column indices is $\ds\binom{N}{2}$.
If $\ds\sum_{i=1}^{N}\ds\binom{a_{i}}{2} > \ds\binom{N}{2}$, the pigeonhole principle implies that there is a pair of column indices such that there are two distinct rows which have 1s in both of those columns; i.e., $S_{A}$ contains a rectangle.

To see that $\ds\sum_{i=1}^{N}\ds\binom{a_{i}}{2} > \ds\binom{N}{2}$, recall that the Cauchy-Schwarz inequality gives
\begin{equation}\label{CSineq}
\left(\ds\sum_{i=1}^{N} a_{i}\right)^{2}\leq\ds\sum_{i=1}^{N}a_{i}^{2}\ds\sum_{i=1}^{N}1^{2}.
\end{equation}
Since $\ds\sum_{i=1}^{N}a_{i} = (n+1)N + 1$ by assumption, the bound in (\ref{CSineq}) gives
\begin{equation}\label{CSineq2}
(n+1)^{2}N + 2(n+1) + \dfrac{1}{N} \leq \ds\sum_{i=1}a_{i}^{2}.
\end{equation}
Since $\ds\sum_{i=1}^{N}a_{i}^{2} = \ds\sum_{i=1}^{N}a_{i}(a_{i}-1) + \ds\sum_{i=1}^{N}a_{i} = 2\ds\sum_{i=1}^{N}\ds\binom{a_{i}}{2} +  (n+1)N + 1$, inequality (\ref{CSineq2}) gives
\begin{equation}\label{CSineq3}
N\left((n+1)^{2} - (n+1)\right)  + 2(n+1) + \dfrac{1}{N} - 1 \leq 2\ds\sum_{i=1}^{N}\ds\binom{a_{i}}{2}.
\end{equation}
Since $(n+1)^{2} - (n+1) = n^{2} + n + 1 - 1 = N-1$, inequality (\ref{CSineq3}) can be rewritten as
\begin{equation}\label{CSineq4}
\dfrac{N(N-1)}{2} + n +\dfrac{1}{N} +\dfrac{1}{2} \leq \ds\sum_{i=1}^{N}\ds\binom{a_{i}}{2},
\end{equation}
and since $n > 0$, the left hand side of (\ref{CSineq4}) is bound from below by $\ds\binom{N}{2}$, as desired.
\end{proof}

It is interesting to note that we have equality in (\ref{CSineq}) just in case all of the $a_{i}$ are equal; that is, each row and column contain the same number of 1s.


\section{Main results}

Our main lemma is below, a useful proposition for dealing with asymptotic behavior of functions when some explicit values of the functions are known. A version of this lemma is used in~\cite{kovarisosturan}, but it is neither proved nor explicitly stated.

\begin{lem} Suppose $g$ and $h$ are monotonically increasing functions. If $a_{n}$ is a strictly increasing sequence of positive integers such that 
\begin{enumerate}
\item 
$\dlim_{n \to \infty}\dfrac{a_{n+1}}{a_{n}} = 1$;
\item 
$\dlim_{n \to \infty}\dfrac{h\left(a_{n+1}\right)}{h\left(a_{n}\right)}=1$; and 
\item
$g(a_{n}) = h(a_{n})$ for all $n$,
\end{enumerate} then $g \sim h$.

\label{mainlem}
\end{lem}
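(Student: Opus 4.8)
The plan is to prove the conclusion by a squeeze argument, sandwiching $g(k)/h(k)$ between the two quantities $h(a_n)/h(a_{n+1})$ and $h(a_{n+1})/h(a_n)$, both of which tend to $1$ by hypothesis (ii). First I would observe that since $(a_n)$ is a strictly increasing sequence of positive integers, $a_n \to \infty$; consequently, for every sufficiently large integer $k$ (namely $k \geq a_1$) there is a unique index $n = n(k)$ with $a_n \leq k < a_{n+1}$, and $n(k) \to \infty$ as $k \to \infty$. This indexing is the only mechanism needed to transfer the information we have at the sample points $a_n$ to an arbitrary $k$.

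Next I would exploit monotonicity. Since $g$ is increasing, $a_n \leq k \leq a_{n+1}$ yields $g(a_n) \leq g(k) \leq g(a_{n+1})$, and likewise $h(a_n) \leq h(k) \leq h(a_{n+1})$. (Here I treat $g$ and $h$ as eventually positive, which is implicit in the asymptotic statement and holds in the intended application $h(k) = k^{3/2}$.) Applying hypothesis (iii) to rewrite $g(a_n) = h(a_n)$ and $g(a_{n+1}) = h(a_{n+1})$, and pairing the largest admissible numerator with the smallest admissible denominator (and vice versa), these bounds combine to give
\[
\frac{h(a_n)}{h(a_{n+1})} \leq \frac{g(k)}{h(k)} \leq \frac{h(a_{n+1})}{h(a_n)}.
\]

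Finally, I would let $k \to \infty$, so that $n = n(k) \to \infty$. By hypothesis (ii) the right-hand ratio tends to $1$, and taking reciprocals the left-hand ratio tends to $1$ as well; the squeeze theorem then forces $\dlim_{k \to \infty} g(k)/h(k) = 1$, i.e.\ $g \sim h$.

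I do not anticipate a serious obstacle, as the argument is essentially a careful bookkeeping of inequalities. The only points requiring attention are confirming that $n(k) \to \infty$ so that (ii) may legitimately be invoked, and keeping the inequality directions straight when forming the ratio. It is worth remarking that this argument never uses hypothesis (i): the condition $a_{n+1}/a_n \to 1$ is not needed for the lemma itself, but is the natural tool for verifying (ii) in practice---for instance, when $h(k) = k^{3/2}$ one has $h(a_{n+1})/h(a_n) = (a_{n+1}/a_n)^{3/2} \to 1$ precisely because of (i).
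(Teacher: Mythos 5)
Your proof is correct and follows essentially the same route as the paper's: both sandwich $g(k)/h(k)$ between $h(a_n)/h(a_{n+1})$ and $h(a_{n+1})/h(a_n)$ using monotonicity of $g$ and $h$ together with agreement at the sample points, then invoke hypothesis (ii) to conclude (the paper phrases this with an explicit $\varepsilon$ rather than a squeeze, but the content is identical). Your side remark that hypothesis (i) is never actually needed is also accurate --- the paper's own proof likewise uses only (ii) and (iii), with (i) serving in the applications to verify (ii).
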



Theorem \ref{mainthm} recovers the asymptotic result of K\"ovari, S\'os, and Tur\'an. Theorem \ref{affinethm} strengthens another of their results. The proofs exploit the connection to projective planes, cleaning up the arguments found in~\cite{kovarisosturan}. 
Theorem \ref{lowerbound} is an explicit lower bound for $f(k)$, which holds for all $k$.

\begin{thm} $f(k) \sim k^{3/2}$.

\label{mainthm}
\end{thm}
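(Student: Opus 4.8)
The plan is to combine Lemma~\ref{mendellem}, which gives exact values of $f$ at the arithmetically sparse sequence $N_n = n^2+n+1$ (ranging over those $n$ for which a projective plane of order $n$ exists), with Lemma~\ref{mainlem}, the ratio-stability criterion, to upgrade these isolated exact values into the global asymptotic $f(k)\sim k^{3/2}$. The key observation is that Lemma~\ref{mainlem} is designed precisely for this situation: one knows $g=f$ only along a sparse sequence, and one wants to deduce asymptotic equality of $g$ with a candidate $h(k)=k^{3/2}$ everywhere.

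First I would fix the comparison function $h(k)=k^{3/2}$ and choose a sequence $a_n$ along which $f$ is known exactly. Since projective planes exist for all prime-power orders, I would let $p_n$ enumerate the primes (or prime powers) and set $a_n = p_n^2 + p_n + 1 = N_{p_n}$; then Lemma~\ref{mendellem} gives $f(a_n) = (p_n+1)(p_n^2+p_n+1)+1 = (p_n+1)a_n + 1$. To apply Lemma~\ref{mainlem} I must verify its three hypotheses. Hypothesis~(iii), $g(a_n)=h(a_n)$, cannot hold on the nose since $f(a_n)$ is an integer near $p_n^{3}$ while $a_n^{3/2}\approx p_n^3$ is irrational; the honest route is to note that the lemma really needs $g(a_n)\sim h(a_n)$, i.e. $f(a_n)/a_n^{3/2}\to 1$, which I would either check is all the statement of Lemma~\ref{mainlem} actually uses, or accommodate by defining $h$ to agree exactly with a smooth function passing through the known points and then comparing that $h$ to $k^{3/2}$. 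Concretely, $f(a_n)=(p_n+1)a_n+1$ and $a_n^{3/2}=(p_n^2+p_n+1)^{3/2}$; expanding both as polynomials in $p_n$ shows the leading term of each is $p_n^3$, so the ratio tends to $1$.

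Next I would discharge hypotheses~(i) and~(ii). For~(i), the ratio of consecutive sequence values is $a_{n+1}/a_n = (p_{n+1}^2+p_{n+1}+1)/(p_n^2+p_n+1)$; since consecutive primes satisfy $p_{n+1}/p_n\to 1$ (by Bertrand's postulate, or more simply by the prime number theorem), the dominant $p^2$ terms force this ratio to $1$. For~(ii), since $h(k)=k^{3/2}$ is continuous and $a_{n+1}/a_n\to 1$, we get $h(a_{n+1})/h(a_n)=(a_{n+1}/a_n)^{3/2}\to 1$ immediately. The monotonicity of $g=f$ and of $h=k^{3/2}$ is clear: $f$ is nondecreasing in each argument, and $k^{3/2}$ is increasing. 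With all hypotheses in hand, Lemma~\ref{mainlem} yields $f\sim k^{3/2}$.

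The main obstacle I anticipate is the mismatch in hypothesis~(iii): strict equality $f(a_n)=a_n^{3/2}$ is impossible, so the clean application of Lemma~\ref{mainlem} as literally stated requires a small reinterpretation, replacing exact equality along $a_n$ with asymptotic equality there and checking that the squeeze argument underlying Lemma~\ref{mainlem} still goes through. The secondary subtlety is the reliance on the infinitude and density of prime powers to ensure the sequence $a_n$ is both infinite and spaced so that $a_{n+1}/a_n\to 1$; this is exactly the point where the existence theory of projective planes (available for every prime power) feeds in, and it is the only external input beyond the two lemmas.
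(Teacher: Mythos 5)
Your proposal is correct in substance and follows essentially the paper's route: combine Lemma~\ref{mendellem} at $a_n = p_n^2+p_n+1$ (with $p_n$ running over the primes) with Lemma~\ref{mainlem}, using the prime number theorem to get $a_{n+1}/a_n \to 1$. The one place you diverge is the handling of hypothesis~(iii), and there the paper does exactly what you list as your second remedy: rather than weakening the lemma to asymptotic equality along the sequence (which works, but amounts to re-proving the lemma), it chooses the comparison function $h(k) = \left(\sqrt{k-\tfrac{3}{4}}+\tfrac{1}{2}\right)k+1$, which interpolates the known values exactly --- since $n+1 = \sqrt{N-3/4}+1/2$ when $N = n^2+n+1$, one has $h(n^2+n+1)=(n+1)(n^2+n+1)+1 = f(n^2+n+1)$ --- so Lemma~\ref{mainlem} applies verbatim, and then $h(k)\sim k^{3/2}$ finishes by transitivity. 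Two corrections to your write-up. First, Bertrand's postulate does \emph{not} give $p_{n+1}/p_n\to 1$; it gives only $p_{n+1}<2p_n$, so the prime number theorem (cited in the paper via the elementary proofs of Selberg and Erd\H{o}s) is genuinely needed, not merely a ``simpler'' alternative. Second, if you do pursue your first remedy --- weakening hypothesis~(iii) to $g(a_n)/h(a_n)\to 1$ --- you should carry out the squeeze explicitly: for $a_n \leq m \leq a_{n+1}$ one gets $\dfrac{g(a_n)}{h(a_{n+1})} \leq \dfrac{g(m)}{h(m)} \leq \dfrac{g(a_{n+1})}{h(a_n)}$, and the outer terms tend to $1$ by multiplying the two hypothesized limits; as it stands your proposal only asserts, without verification, that the lemma ``still goes through.''
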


\begin{thm}\label{affinethm}
Let $n$ be a positive integer. If there is a projective plane of order $n$, then $f(n^{2},n^{2}+n) = n^{2}\left(n+1\right) + 1$.

\end{thm}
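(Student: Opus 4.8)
The plan is to follow the two-part template of Lemma \ref{mendellem}: exhibit an extremal rectangle-free configuration for the lower bound, and run a double-counting argument closed off by Cauchy--Schwarz for the upper bound. Throughout I regard a subset of $[n^2]\times[n^2+n]$ as an $n^2\times(n^2+n)$ $(0,1)$-matrix, using the correspondence from the introduction.

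For the lower bound I would build the incidence matrix of the affine plane of order $n$ obtained by deleting a line $\ell$ and its $n+1$ points from the projective plane $P$ guaranteed by hypothesis. Concretely, starting from the $N\times N$ incidence matrix of the lower-bound construction in Lemma \ref{mendellem} (with $N=n^2+n+1$), I delete the $n+1$ rows indexed by the points of $\ell$ together with the single column indexed by $\ell$ itself. This leaves an $n^2\times(n^2+n)$ matrix $A$. Since every surviving line met $\ell$ in exactly one (now-deleted) point, each column of $A$ has exactly $n$ ones, so $|S_A| = n(n^2+n) = n^2(n+1)$. Because any two lines of $P$ meet in a single point, any two columns of $A$ share at most one common $1$; hence $S_A$ is rectangle-free, and $f(n^2,n^2+n)\ge n^2(n+1)+1$.

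For the upper bound, let $A$ be any $n^2\times(n^2+n)$ matrix with $n^2(n+1)+1$ ones and, for each column $j$, let $b_j$ denote its number of ones. Two rows sharing a $1$ in two distinct columns form a rectangle, so in a rectangle-free matrix each of the $\binom{n^2}{2}$ row-pairs is covered by at most one column, giving $\sum_j\binom{b_j}{2}\le\binom{n^2}{2}$. I would therefore show that $|S_A|=n^2(n+1)+1$ forces $\sum_j\binom{b_j}{2}>\binom{n^2}{2}$, a contradiction. Applying Cauchy--Schwarz across the $n^2+n$ columns yields $\sum_j b_j^2\ge (n^3+n^2+1)^2/(n^2+n)$; substituting this into $\sum_j\binom{b_j}{2}=\tfrac12\bigl(\sum_j b_j^2-\sum_j b_j\bigr)$ and simplifying produces the required strict inequality for every $n\ge 1$.

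The main obstacle is choosing the correct orientation for Cauchy--Schwarz. Because the grid is far from square, summing over the short side (the $n^2$ rows, in direct imitation of Lemma \ref{mendellem}) gives a bound that falls just short of the threshold; one must instead count row-pairs within columns and apply Cauchy--Schwarz over the $n^2+n$ columns, whose near-uniform loading (average exactly $n$ ones per column) makes the estimate tight enough. I would verify that the leftover surplus term is positive for all $n\ge 1$, and record as a sanity check that the affine-plane matrix attains $\sum_j\binom{b_j}{2}=\binom{n^2}{2}$ with equality, reflecting the fact that every pair of affine points lies on exactly one line.
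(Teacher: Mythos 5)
Your proposal is correct. The lower bound is exactly the paper's: both delete a line of the projective plane together with its $n+1$ points, producing the affine-plane incidence matrix with $n^{2}(n+1)$ ones in which no two columns share two common rows. Where you genuinely diverge is the upper bound. The paper disposes of it in one line by citing Reiman's inequality (\ref{reimanineq}), whose right-hand side simplifies to $n^{2}(n+1)+1$ at these grid dimensions; you instead reprove the needed case from scratch by adapting the Cauchy--Schwarz count of Lemma \ref{mendellem}. Your computation does close: with $\sum_{j} b_{j} = n^{3}+n^{2}+1$ spread over $n^{2}+n$ columns, Cauchy--Schwarz gives $\sum_{j} \binom{b_{j}}{2} \geq \frac{(n^{3}+n^{2}+1)(n^{3}-n+1)}{2(n^{2}+n)}$, and since $(n^{3}+n^{2}+1)(n^{3}-n+1) - n^{3}(n-1)(n+1)^{2} = 2n^{3}+n^{2}-n+1 > 0$, this strictly exceeds $\binom{n^{2}}{2}$ for every $n \geq 1$, forcing a rectangle. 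Your point about orientation is also real, not a technicality: counting column-pairs within rows in direct imitation of Lemma \ref{mendellem} gives only $\sum_{i} \binom{a_{i}}{2} \geq \frac{(n^{3}+n^{2}+1)(n^{3}+1)}{2n^{2}}$, which falls short of $\binom{n^{2}+n}{2}$ by roughly $n^{3}/2$ once $n \geq 2$, so that version of the argument cannot reach a contradiction. As for what each route buys: the paper's proof is shorter but rests on an external theorem, while yours is self-contained, exhibits why the bound is sharp (your equality check $\sum_{j} \binom{b_{j}}{2} = \binom{n^{2}}{2}$ for the affine-plane matrix is correct, each column having exactly $n$ ones), and in effect reproves the relevant case of Reiman's inequality rather than quoting it.
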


\begin{thm}\label{lowerbound}
If $k \in \mathbb{Z}$ with $k \geq 3$, then $f(k) \geq \dfrac{1}{16}\left((k+4)\sqrt{4k-3} + 5k + 22\right)$.

\end{thm}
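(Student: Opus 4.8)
The plan is to produce, for every $k$, an explicit rectangle-free subset of $[k]\times[k]$ whose cardinality matches the stated bound, by embedding a projective-plane configuration of the largest affordable prime order. Set $P = \frac{-1+\sqrt{4k-3}}{2}$, the positive root of $P^{2}+P+1=k$. Since $t\mapsto t^{2}+t+1$ is increasing on $[0,\infty)$, a positive integer $p$ satisfies $p^{2}+p+1\le k$ exactly when $p\le P$. Two identities will drive the final computation and are worth recording at the outset: $\sqrt{4k-3}=2P+1$ and $k=P^{2}+P+1$.

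The construction itself is exactly the lower-bound half of Lemma \ref{mendellem}. If $p$ is prime there is a projective plane of order $p$, and its point-line incidence matrix yields a rectangle-free subset of $[p^{2}+p+1]\times[p^{2}+p+1]$ of size $(p+1)(p^{2}+p+1)$. Whenever $p\le P$ we have $p^{2}+p+1\le k$, so this configuration sits inside the top-left corner of $[k]\times[k]$ and stays rectangle-free there. Hence $f(k)\ge (p+1)(p^{2}+p+1)+1=\phi(p)$, where $\phi(t)=t^{3}+2t^{2}+2t+2$ is strictly increasing on $[0,\infty)$.

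It remains to choose $p$ as large as possible subject to $p\le P$, and for this I would invoke Bertrand's postulate in the form: for every integer $n\ge1$ there is a prime with $n<p\le 2n$. For $k\ge7$ we have $P\ge2$, so $n=\lfloor P/2\rfloor\ge1$, and Bertrand supplies a prime $p$ with $n<p\le 2n$. Then $p>\lfloor P/2\rfloor$ forces $p>P/2$, while $p\le 2\lfloor P/2\rfloor\le P$ keeps the plane embeddable. Feeding $p>P/2$ into the increasing function $\phi$ gives
\[
f(k)\ge \phi(p)\ge \phi\!\left(\tfrac{P}{2}\right)=\frac{P^{3}}{8}+\frac{P^{2}}{2}+P+2,
\]
and substituting $\sqrt{4k-3}=2P+1$ together with $k=P^{2}+P+1$ collapses the right-hand side into precisely $\frac{1}{16}\bigl((k+4)\sqrt{4k-3}+5k+22\bigr)$. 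The finitely many remaining cases $3\le k\le 6$ I would dispatch directly: a diagonal (permutation) set of $k$ ones is rectangle-free, so $f(k)\ge k+1$, which already exceeds the stated bound in that range.

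The main obstacle is the simultaneous control demanded of the Bertrand step: the prime must be small enough for its plane to fit inside the grid ($p\le P$) yet large enough to clear the halfway threshold ($p>P/2$), and it is exactly the factor-of-two slack in Bertrand's postulate, cubed through $\phi$, that produces the $\frac18$ gap in the leading coefficient relative to Reiman's upper bound. Checking that $\phi(P/2)$ reduces to the advertised closed form is the other point requiring care, though it is a routine substitution once the two identities above are in hand.
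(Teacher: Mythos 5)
Your proof is correct, and it lands on exactly the same quantity as the paper's own argument: both amount to evaluating $g(P/2)$, where $g(n)=(n+1)(n^{2}+n+1)+1$ and $P/2=\frac{\sqrt{4k-3}-1}{4}$ (the paper writes this as $h(k)=\frac{\sqrt{k-3/4}-1/2}{2}$, which is the same number). The difference is the number-theoretic input used to find a plane of order at least $P/2$. The paper avoids Bertrand's postulate entirely: it covers $[3,\infty)$ by the overlapping dyadic intervals $\bigl[2^{2\alpha}+2^{\alpha}+1,\;2^{2\alpha+2}+2^{\alpha+1}+1\bigr]$ and embeds a projective plane of order $2^{\alpha}$, so the only existence fact needed is that planes exist in every prime-power order; the factor-of-two gap between consecutive powers of $2$ plays precisely the role that the interval $(n,2n]$ plays for you. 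Your version needs only planes of \emph{prime} order, but pays with an appeal to Bertrand's postulate and with a separate verification of $3\le k\le 6$ (which you handle correctly via the diagonal set; your bookkeeping that $\lfloor P/2\rfloor < p\le 2\lfloor P/2\rfloor$ forces $P/2<p\le P$ is also sound). A small bonus of your route: for $k\ge 7$ your primes satisfy $p\ge 2$, so you never invoke a plane of order $1$, whereas the paper's case $\alpha=0$ (needed for $3\le k\le 6$) formally applies Lemma \ref{mendellem} with $n=1$, and order $1$ violates the non-degeneracy axiom (iv) in the paper's definition; the incidence construction still goes through there, but your argument sidesteps the degeneracy altogether. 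Both proofs, as you note, concede the same asymptotic factor of $\frac{1}{8}$, since the factor-of-two slack gets cubed through $g$.
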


\section{Proof of Lemma \ref{mainlem}}

Now we prove Lemma \ref{mainlem}.

\begin{proof} Let $g$ and $h$ be monotonically increasing functions. Suppose $a_{n}$ is a strictly increasing sequence of positive integers such that $\dlim_{n \to \infty}\dfrac{h\left(a_{n+1}\right)}{h\left(a_{n}\right)}=1$ and that $g\left(a_{n}\right) = h\left(a_{n}\right)$ for all $n$. 
Let $\varepsilon > 0$. 
Choose $N$ so that 
\begin{equation}\label{initialineq}
\left|\dfrac{h(a_{n+1})}{h(a_{n})} - 1\right| < \varepsilon\ \text{and}\ \left|\dfrac{h(a_{n})}{h(a_{n+1})} -1\right|< \varepsilon
\end{equation} whenever $n > N$. 
Next, choose $m$ large enough so that for some $n > N$, we have $a_{n} \leq m \leq a_{n+1}$. 
Since $g$ is increasing and $g$ and $h$ agree on the sequence $a_{n}$, we have 
\begin{equation}\label{ineqs:1}
h(a_{n}) = g(a_{n}) \leq g(m) \leq g(a_{n+1}) = h(a_{n+1}).
\end{equation}
Since $h$ is monotone increasing, $h(a_{n}) \leq h(m) \leq h(a_{n+1})$, so we may transform (\ref{ineqs:1}) into
\begin{equation}\label{ineqs:2}
\dfrac{h(a_{n})}{h(a_{n+1})} \leq \dfrac{g(m)}{h(m)} \leq \dfrac{h(a_{n+1})}{h(a_{n})}.
\end{equation}
Subtracting 1 from every term in (\ref{ineqs:2}) and taking absolute values gives that either 
\[\left|\dfrac{g(m)}{h(m)} - 1\right| \leq \left|\dfrac{h(a_{n+1})}{h(a_{n})} - 1\right|\ \text{or}\ \left|\dfrac{g(m)}{h(m)} - 1\right| \leq \left|\dfrac{h(a_{n})}{h(a_{n+1})} - 1\right|.\]
Without loss of generality, say $\left|\dfrac{g(m)}{h(m)} - 1\right| \leq \left|\dfrac{h(a_{n+1})}{h(a_{n})} - 1\right|$.
By (\ref{initialineq}), we have
\[\left|\dfrac{g(m)}{h(m)} - 1 \right| < \varepsilon,\]
so $\dfrac{g}{h} \to 1$ and $g \sim h$, as desired.
\end{proof}


\section{Proof of Theorem \ref{mainthm}}\label{mainthmproof}

Now we prove Theorem \ref{mainthm}.
\begin{proof} For a positive integer $k$, set \[h(k) = \left(\sqrt{k - \dfrac{3}{4}} + \dfrac{1}{2}\right)k + 1. \]
Notice that $h(k) \sim k^{3/2}$ and that $h(n^{2}+n+1) = (n+1)(n^{2}+n+1)+1$, so by Lemma \ref{mendellem}, we have $f(n^{2}+n+1) = h(n^{2}+n+1)$ whenever there is a projective plane of order $n$. Since there a projective plane of order $p$ for every prime $p$, we have that $f$ and $h$ agree on an infinite sequence of integers $a_{n}$ for which $\dfrac{a_{n+1}}{a_{n}} \rightarrow 1$ (see \cite{selbergprimes,erdosprimes}). Notice that $\dfrac{h\left(a_{n+1}\right)}{h\left(a_{n}\right)}\rightarrow 1$, so we may apply Lemma \ref{mainlem} to achieve $f \sim h$, and thus $f \sim k^{3/2}$, as desired.
\end{proof}

\section{Proof of Theorem \ref{affinethm}}

\begin{proof}
Let $n$ be a positive integer such that there is a projective plane of order $n$. Set $N = n^{2} + n + 1$. As in the proof of Lemma \ref{mendellem}, we can construct an $N \times N$ matrix $A$ such that $tr\left(A^{T}A\right) = (n+1)N$ and that $A^{T}A$ has only 1s off the main diagonal; hence, the corresponding subset $S_{A}$ of the $N \times N$ grid has no rectangle.

To construct an $n^{2}\times \left(n^{2}+n\right)$ matrix $B$ from $A$, we delete the first column of $A$ along with all rows having a 1 in the first column. Since each row and column of $A$ contains exactly $n+1$ nonzero entries, we have deleted $n+1$ rows and $1$ column. The resulting matrix $B$ is thus an $n^{2}\times \left(n^{2}+n\right)$ matrix. Since $A^{T}A$ has no entries off the main diagonal greater than 1, $B^{T}B$ has no entries off the main diagonal greater than 1. Since we have deleted $\left(n+1\right)^{2}$ nonzero entries from $A$, we have that 
\[\left|S_{B}\right| = (n+1)N - \left(n+1\right)^{2} = \left(n+1\right)\left(n^{2}+ n + 1\right) - \left(n+1\right)^{2} = n^{2}(n+1),\]
so $f\left(n^{2}, n^{2}+n\right) \geq n^{2}\left(n + 1\right)+1.$

Using the inequality from Reiman (\ref{reimanineq}), 
\[f\left(n^{2},n^{2}+n\right) \leq n^{2}\left(n+1\right)+1,\] and hence $f\left(n^{2},n^{2}+n\right) = n^{2}\left(n+1\right)+1$, as desired.
\end{proof}

The structure obtained by taking a projective plane and deleting a line together with all of the points on that line is called an \emph{affine plane}. Our result is stronger than that of the authors in~\cite{kovarisosturan}, since we need only that there is a projective plane of order $n$, not that $n$ is a prime number.


\section{Proof of Theorem \ref{lowerbound}}

\begin{proof}
Suppose $k$ is an integer with $k \geq 3$. There exists a nonnegative integer $\alpha$ such that
\begin{equation}\label{alphabound}
2^{2\alpha} + 2^{\alpha}+1 \leq k \leq 2^{2\alpha + 2} + 2^{\alpha + 1} + 1.
\end{equation}
By focusing on the upper bound from (\ref{alphabound}), this gives $k \leq \left(2^{\alpha+1} + 1/2\right)^{2} + 3/4$, or 
\begin{equation}\label{LB1}
\dfrac{\sqrt{k - 3/4} - 1/2}{2} \leq 2^{\alpha}.
\end{equation}
Let $g(n) = (n+1)(n^{2} + n + 1) + 1$, and let $h(k) = \dfrac{\sqrt{k - 3/4} - 1/2}{2}$. Since $g$ is an increasing function, inequality (\ref{LB1}) gives
\begin{equation}\label{LB2}
g\left(h(k)\right) \leq g\left(2^{\alpha}\right).
\end{equation}
By Lemma \ref{mendellem}, we have $g(n) = f(n^{2} + n + 1)$ whenever there exists a projective plane of order $n$. Since there is a projective plane of any prime power order, (\ref{LB2}) gives
\begin{equation}\label{LB3}
g\left(h(k)\right) \leq f\left(2^{2\alpha} + 2^{\alpha} + 1\right).
\end{equation}
But since $f$ is increasing, the lower bound in (\ref{alphabound}) gives $g\left(h(k)\right) \leq f(k)$, and since $g\left(h(k)\right) = \dfrac{1}{16}\left((k+4)\sqrt{4k-3} + 5k + 22\right)$, we have the desired result.

We also note that while $g\left(h(k)\right) \sim \dfrac{1}{8}k^{3/2}$, which is worse than the result in Theorem \ref{mainthm}, this lower bound holds for every choice of $k$, and not just those $k$ for which there exists a projective plane of order $k$.
\end{proof}


\section{Further Research}

Trying to find the exact value of $f(m,n)$ without conditions on $m$ and $n$ (that is, removing the extra hypotheses from the results in~\cite{kovarisosturan}) would be attractive, although this problem has been open for years, and likely requires a new idea.

The next attractive direction is to take the approach of the authors in~\cite{gasarchandco}, and consider colorings of rectangular grids.

Recall that OBS$_{c}$ is the collection of $[m]\times[n]$ grids which cannot be colored in $c$ colors without a monochromatic rectangle, but every proper subgrid can be. An open problem from~\cite{gasarchandco} is the \emph{rectangle-free conjecture}: if there exists a rectangle-free subset of $[m]\times[n]$ of size $\left\lceil mn/c\right\rceil$, then it is possible to color $[m] \times [n]$ in $c$ colors so there is no monochromatic rectangle. Since the authors in~\cite{gasarchandco} have theorems which depend on the rectangle-free conjecture, resolving this conjecture either in the affirmative or the negative would result in progress for obtaining $|\text{OBS}_{c}|$ or even OBS$_{c}$.

\noindent
{\bf Acknowledgments.} The authors wish to thank Jim Marshall for carefully reading a draft of this manuscript. The authors also thank the anonymous referees for their helpful comments made toward improving the paper.

\bibliographystyle{plain}

\end{document}